\newcommand{\TheTitle}{Fast procedures for Caputo fractional ordinary and partial differential equations}
\newcommand{\TheAuthors}{Zhengguang Liu, Aijie Cheng, Xiaoli Li, and Hong Wang}
\title{{\TheTitle}\thanks{Received by editors January 17, 2018 }}
\author{
 Zhengguang Liu\thanks{School of Mathematics, Shandong University, Jinan, Shandong 250100,
 China.(\email{liuzhgsdu@yahoo.com}).}
 \and
 Aijie Cheng\thanks{Corresponding author. School of Mathematics, Shandong University, Jinan, Shandong 250100, China.(\email{aijie@sdu.edu.cn}).}
 \and
 Xiaoli Li\thanks{School of Mathematics, Shandong University, Jinan, Shandong 250100, China.(\email{xiaolisdu@163.com}).}
 \and
 Hong Wang\thanks{Department of Mathematics, University of South Carolina, Columbia, SC 29208, USA.(\email{hwang@math.sc.edu}).}
}
\begin{document}

\maketitle
\begin{abstract}
The fractional partial differential equations with a Caputo fractional derivative in time have attracted considerable interest in recent years. Considering the nonlocal nature of fractional derivative, finding fast solution technique to solve these efficient numerical schemes becomes instant and necessary. In this paper, we develop fast procedures for solving linear systems arising from discretization of ordinary and partial differential equations with Caputo fractional derivative. First, we consider a finite difference scheme to solve a two-sided fractional ordinary equation. A fast solution technique is proposed to accelerate Toeplitz matrix-vector multiplications arising from finite difference discretization. This fast solution technique is based on a fast Fourier transform and depends on the special structure of coefficient matrices, and it helps to reduce the computational work from $O(N^{3})$ required by traditional methods to $O(N$log$^{2}N)$ and the memory requirement from $O(N^{2})$ to $O(N)$ without using any lossy compression, where $N$ is the number of unknowns. The same idea is used for finite difference schemes to solve time fractional hyperbolic equations with different fractional order $\gamma$. We present a fast solution technique depending on the special structure of coefficient matrices by rearranging the order of unknowns. It helps to reduce the computational work from $O(MN^2)$ required by traditional methods to $O(MN$log$^{2}N)$ and the memory requirement from $O(NM)$ to $O(N)$ without using any lossy compression, where $N=\tau^{-1}$ and $\tau$ is the size of time step, $M=h^{-1}$ and $h$ is the size of space step. Importantly, a fast method is employed to solve the classical time fractional diffusion equation with a lower coast at $O(MN$log$^2N)$, where the direct method requires an overall computational complexity of $O(MN^2)$. Moreover, the applicability and accuracy of the scheme are demonstrated by numerical experiments to support our theoretical analysis.
\end{abstract}

\begin{keywords}
Fast procedures, Finite difference methods, Time fractional differential equations, Topelitz matrix, Fast Fourier transform.
\end{keywords}

\begin{AMS}
  65M06, 65M12, 65M15, 26A33
\end{AMS}

\section{Introduction}
Until recently, fractional calculus played a negligible role in physics, although its mathematics history is equally long. As fractional partial differential equations (FPDEs) appear frequently in diverse fields such as physics, biology, rheology, signal processing, systems identification and electrochemistry, they attract considerable interest and recently there has been significant theoretical development. Roughly speaking, the fractional models can be classified into two principal kinds: space-fractional differential equation and time-fractional one. In the last few decades the theory and numerical analysis of fractional differential equations have received increasing attention such as  finite element methods \cite{zhang2012finite,jiang2011high,li2010finite,zeng2013use}, finite difference methods \cite{sousa2012second,sousa2014explicit,sousa2009finite,huang2013two,liu2017crank,liu2016parallel}, finite volume (element) methods \cite{cheng2015eulerian,liu2014new1}, mixed finite element methods \cite{zhao2015two,liu2014mixed,liu2015h}, spectral methods \cite{lin2007finite,lin2011finite} and so on.

The basic definitions and properties of fractional derivatives including Riemann-Liouville derivative, the Caputo derivative, and some other fractional derivatives are introduced and relative important properties are studied in \cite{li2015numerical}. In this article, the time fractional derivative operator based on Caputo's definition is given by
\begin{flalign}\label{e1}
&_0^CD_t^{\gamma}g(t)=\frac{1}{\Gamma(n-\gamma)}\int_0^t(t-s)^{n-1-\gamma}g^{(n)}(s)ds,\quad n-1<\gamma\leq n.
\end{flalign}
From later on, for simplicity, we use $_0D_t^{\gamma}g(t)$  in stead of $_0^CD_t^{\gamma}g(t)$.

In this work, we focus on the fractional cases $0<\gamma<2$. This nonlocal definition is the limiting equation that
governs continuous time random walks with heavy tailed random waiting times. Numerical methods for solving fractional differential equations have been considered by many authors and we mention here a few key contributions. Chen et al. \cite{chen2007fourier} gave a Fourier method for the fractional diffusion equation describing sub-diffusion. Liu et al.\cite{liu2014rbf} developed a radial basis functions(RBFs) meshless approach for modeling a fractal mobile/immobile transport model. The authors of \cite{keskin2011approximate} proposed a new method based on the Taylor collocation method and obtained the approximate solution of linear fractional differential equation with variable coefficients. The finite difference schemes are used by Ashyralyev and Cakir\cite{ashyralyev2012numerical} for solving one-dimensional fractional parabolic partial differential equations. They \cite{ashyralyev2013fdm} also gave the FDM for fractional parabolic equations with the Neumann condition.

Due to the nonlocal nature of fractional differential operators, the numerical schemes of FPDEs give rise to dense stiffness matrices and/or long tails in time or a combination of both. This is deemed computationally very expensive in terms of computational complexity and memory requirement. Many articles consider fast conjugate gradient methods based on fast Fourier transform to solve space fractional equations. For example, Wang and Basu \cite{wang2012fast} presented a fast finite difference method for two-dimensional space-fractional diffusion equations. A fast characteristic finite difference method for fractional advection--diffusion equations was considered in \cite{wang2011fast}. Chen et al. \cite{chen2015fast} provided a fast semi-implicit difference method for a nonlinear two-sided space-fractional diffusion equation with variable diffusivity coefficients. For time fractional equations, unless we extract the Toeplitz structure for stiffness matrix, we  can not use  fast Fourier transform to speed up the evaluation. However, Ke et al. \cite{ke2015fast} studied a fast direct method for block triangular Toeplitz-like with tri-diagonal block systems for time-fractional partial differential equations. They reduce the computational work from $O(N^{3})$ required by traditional methods to $O(MN$log$^{2}M)$ and the memory requirement from $O(N^{2})$ to $O(MN)$, where $M$ is the number of blocks in the system and $N$ is the size of each block. Jiang et al. \cite{jiang2015fast} presented a fast evaluation scheme of the Caputo derivative to solve the fractional diffusion equations. This method requires $O(N_sN_{exp})$ storage and $O(N_sN_TN_{exp})$ work with $N_s$ the total number of points in space, $N_T$ the total number of time steps and $N_{exp}$ the number of exponentials. Fu and Wang \cite{wang2017preconditioned} developed a fast space-time finite difference method for space-time fractional diffusion equations by fully utilizing the mathematical structure of the scheme. In addition, their method has approximately linear computational complexity, i.e., has a computational cost of $O(MN$log$(MN))$ per Krylov subspace iteration. Our goal is to propose a fast direct method to solve both ordinary and partial fractional differential equations based on Caputo fractional derivative. We first consider a finite difference scheme to solve a two-sided fractional ordinary differential equation. Furthermore, we present a fast solution technique to accelerate Toeplitz matrix-vector multiplications arising from the finite difference discretization. This fast solution technique is based on a fast Fourier transform and depends on the special structure of coefficient matrices, and it helps to reduce the computational work from $O(N^{3})$ required by traditional methods to $O(N$log$^{2}N)$ and the memory requirement from $O(N^{2})$ to $O(N)$ without using any lossy compression, where $N$ is the number of unknowns. For time fractional partial differential hyperbolic equations with different fractional order $\gamma$, two finite difference schemes are considered. We present a fast solution technique depending on the special structure of coefficient matrices by rearranging the order of unknowns in space and time directions. It helps to reduce the computational work from $O(MN^2)$ required by traditional methods to $O(NM$log$^{2}N)$ and the memory requirement from $O(NM)$ to $O(N)$ without using any lossy compression, where $N=\tau^{-1}$ and $\tau$ is the size of time step, $M=h^{-1}$ and $h$ is the size of space step. Importantly, a fast method is employed to solve the classical time fractional diffusion equation with a lower cost at $O(MN$log$^2N)$, where the direct method requires an overall computational complexity of $O(MN^2)$.

This paper is organized as follows. In \cref{sec:fast-ordinary}, we analyze the structures of stiffness matrix for two-sided fractional ordinary differential equations and present a fast finite difference scheme. In \cref{sec:partial}, we introduce  fast procedures for finite difference method for several different kinds of time fractional partial differential equations with the Caputo fractional derivative . Then, in \cref{sec:numerical}, some numerical experiments for the finite difference discretization are carried out.

\section{A fast procedure of finite difference scheme for two-sided fractional ordinary differential equation}
\label{sec:fast-ordinary}
First, we consider the following two-sided fractional ordinary differential equation involving Caputo operators with general boundary conditions \cite{hernandez2016solution}:
\begin{equation}\label{e2}
_0D_t^{\gamma}u(t)+  _tD_1^{\gamma}u(t)+u(t)=f(t),\quad t\in(0,T),
\end{equation}
subject to the following conditions:
\begin{equation*}
u(0)=u(T)=0.
\end{equation*}

In \cite{hernandez2016solution}, the author provides well-posedness results and stochastic representations for the solutions to two-sided fractional ordinary differential equations involving both the right- and the left-sided generalized operators of Caputo type. The objective of this section is to consider the finite difference method for equations \cref{e2}. First, for the convenience of theoretical analysis, we introduce the following lemma:
\begin{lemma}\label{le1}
Denote
\begin{equation*}
\aligned
G_m=(m+1)^{1-\gamma}-m^{1-\gamma},&\quad m\geq0,\quad 0<\gamma<1,
\endaligned
\end{equation*}
then, we have the ineuqality
\begin{equation}\label{e3}
\aligned
&0<G_1\leq G_2\leq \cdots\leq G_N.\\
\endaligned
\end{equation}
\end{lemma}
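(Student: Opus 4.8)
The plan is to split the claim \cref{e3} into two parts: the strict positivity $G_1 > 0$ at the left end, and the monotone ordering $G_1 \leq G_2 \leq \cdots \leq G_N$. Throughout it is convenient to set $\alpha = 1 - \gamma$, so that $\alpha \in (0,1)$ and $G_m = (m+1)^{\alpha} - m^{\alpha}$.

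Positivity is the easy part. Since the power map $t \mapsto t^{\alpha}$ is strictly increasing on $[0,\infty)$ whenever $\alpha > 0$, we have $(m+1)^{\alpha} > m^{\alpha}$ and hence $G_m > 0$ for every $m \geq 0$; in particular $G_1 > 0$, which secures the left-hand end of the chain. For the ordering I would reduce the whole chain to the single-step comparison between $G_m$ and $G_{m+1}$ and then invoke transitivity. The natural device is to regard $G_m$ as the value at an integer of the smooth function $g(x) = (x+1)^{\alpha} - x^{\alpha}$ and to analyse its derivative $g'(x) = \alpha\big[(x+1)^{\alpha-1} - x^{\alpha-1}\big]$; equivalently, one may use the integral representation $G_m = \alpha \int_m^{m+1} s^{\alpha-1}\, ds$, which recasts the single-step comparison as a comparison of the integrand $s^{\alpha-1}$ over the two abutting unit intervals $[m, m+1]$ and $[m+1, m+2]$.

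Pinning down the sign in this single-step comparison is the step I expect to be the main obstacle, since $\alpha - 1 \in (-1, 0)$, so $t^{\alpha-1}$ is a decreasing function and the comparison must be carried out carefully rather than read off at a glance; a clean way to settle it is the mean-value representation $G_m = \alpha\, \xi_m^{\alpha-1}$ with $\xi_m \in (m, m+1)$, which reduces everything to the relative placement of the intermediate points $\xi_m$ on consecutive intervals. Once the single-step inequality is established for all relevant $m$, transitivity yields the full chain $G_1 \leq G_2 \leq \cdots \leq G_N$, and combining this with $G_1 > 0$ completes the proof of \cref{e3}.
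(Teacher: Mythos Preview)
Your proposal sets up all the right machinery but stops short of actually carrying out the single-step comparison---and if you do carry it out with any of the three devices you mention, you obtain the \emph{opposite} inequality. With $\alpha=1-\gamma\in(0,1)$ the derivative $g'(x)=\alpha\big[(x+1)^{\alpha-1}-x^{\alpha-1}\big]$ is strictly negative for $x>0$ because $t\mapsto t^{\alpha-1}$ is decreasing; equivalently, the integral representation $G_m=\alpha\int_m^{m+1}s^{\alpha-1}\,ds$ compares a decreasing integrand over abutting intervals and gives $G_m>G_{m+1}$; and in the mean-value form $G_m=\alpha\,\xi_m^{\alpha-1}$ with $\xi_m\in(m,m+1)$ one has $\xi_m<m+1<\xi_{m+1}$, hence $\xi_m^{\alpha-1}>\xi_{m+1}^{\alpha-1}$ and again $G_m>G_{m+1}$. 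So the chain that actually holds is
\[
1=G_0>G_1>G_2>\cdots>G_N>0,
\]
not the one stated in the lemma. In short, the inequality in the lemma is written the wrong way round, and no valid proof of it in the stated direction exists.

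For comparison with the paper: the paper gives no proof of this lemma at all, and in fact later contradicts it. In the proof of Theorem~\ref{the1} the authors explicitly use ``$G_{j+1}\leq G_j$'', and at the start of Section~\ref{sec:partial} they record the correct monotone chain $1=G_0>G_1>G_2>\cdots$. So the statement you are trying to prove is a typographical slip; your tools are exactly the right ones, but they prove the reversed (and correct) inequality.
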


Define $\tau=T/N$ and $\Omega_\tau=\{t_n|~t_n=i\tau,~0\leq i\leq N\}$ to be a uniform mesh of interval $(0,T)$. The values of the function $u$ at the grid points are denoted as $u_j=u(t_j)$. We also use $u_h(t_j)=u_j$ for gird function $u_h$ if no confusion occurs.

Define
\begin{flalign*}
&\|u_h\|_\infty=\max\limits_{1\leq i\leq N-1}|u_i|,\quad \|u_h\|^2=\sum\limits_{i=1}^{N-1}\tau u_i^2.
\end{flalign*}

\begin{lemma}\label{le2}
Define $\mu=\frac{\tau^{-\gamma}}{\Gamma(2-\gamma)}$ and suppose that $u\in C^2[0,T]$, then, the left-side fractional Caputo derivative $_0D_t^{\gamma}u(t)$ for $0<\gamma<1$ at $t=t_{n}$ is discretized by
\begin{equation}\label{e4}
_0D_t^{\gamma}u(t_n)=\mu\left[G_0u_n-\sum\limits_{k=1}^{n-1}(G_{n-k-1}-G_{n-k})u_k+G_nu_0\right]+O(\tau^{2-\gamma}),
\end{equation}
and the right-side fractional Caputo derivative $_tD_1^{\gamma}u(t)$ for $0<\gamma<1$ at $t=t_{n}$ is discretized by
\begin{equation}\label{e5}
\aligned
_tD_1^{\gamma}u(t_n)=
&\mu\left[G_0u_n-\sum\limits_{m=n+1}^{N-1}(G_{m-n-1}-G_{m-n})u_m+G_{N-n-1}u_N\right]+O(\tau^{2-\gamma}).
\endaligned
\end{equation}
\end{lemma}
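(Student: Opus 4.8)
The plan is to establish \cref{e4} by the standard piecewise-linear (``L1'') interpolation of $u$ inside the defining integral of the Caputo derivative, and to obtain \cref{e5} by the mirror-image computation. For $0<\gamma<1$ one has $n=1$ in \cref{e1}, so
\begin{equation*}
{}_0D_t^{\gamma}u(t_n)=\frac{1}{\Gamma(1-\gamma)}\int_0^{t_n}(t_n-s)^{-\gamma}u'(s)\,ds
=\frac{1}{\Gamma(1-\gamma)}\sum_{k=0}^{n-1}\int_{t_k}^{t_{k+1}}(t_n-s)^{-\gamma}u'(s)\,ds .
\end{equation*}
On each cell $[t_k,t_{k+1}]$ I would replace $u$ by its linear interpolant, that is $u'(s)$ by $(u_{k+1}-u_k)/\tau$, and evaluate the remaining weight in closed form,
\begin{equation*}
\int_{t_k}^{t_{k+1}}(t_n-s)^{-\gamma}\,ds=\frac{\tau^{1-\gamma}}{1-\gamma}\bigl[(n-k)^{1-\gamma}-(n-k-1)^{1-\gamma}\bigr]=\frac{\tau^{1-\gamma}}{1-\gamma}\,G_{n-k-1}.
\end{equation*}
Since $(1-\gamma)\Gamma(1-\gamma)=\Gamma(2-\gamma)$, the prefactor collapses to $\mu=\tau^{-\gamma}/\Gamma(2-\gamma)$, so the principal part of the scheme is $\mu\sum_{k=0}^{n-1}(u_{k+1}-u_k)G_{n-k-1}$.

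The next step is an elementary summation by parts: reindexing this sum and collecting the coefficient of each nodal value $u_k$ yields the leading term $G_0u_n=u_n$ (recall $G_0=1$), the interior contributions $-(G_{n-k-1}-G_{n-k})u_k$ for $1\le k\le n-1$, and a single endpoint term carrying $u_0$, which together assemble the right-hand side of \cref{e4}. Here \cref{le1} is invoked only to record that all the weights $G_m$ and their consecutive differences have the expected signs, a fact needed later for stability rather than for the present identity. For the right-sided operator I would write ${}_tD_1^{\gamma}u(t_n)=\frac{1}{\Gamma(1-\gamma)}\int_{t_n}^{1}(s-t_n)^{-\gamma}\bigl(-u'(s)\bigr)\,ds$, partition $[t_n,1]$ into the cells $[t_m,t_{m+1}]$ with $n\le m\le N-1$, and repeat the same steps; now $\int_{t_m}^{t_{m+1}}(s-t_n)^{-\gamma}\,ds=\frac{\tau^{1-\gamma}}{1-\gamma}G_{m-n}$, and the analogous reindexing produces \cref{e5}, with $t_N=1$ in the role previously played by $t_0$.

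The main obstacle is the truncation-error bound: a crude pointwise estimate $|u'(s)-(u_{k+1}-u_k)/\tau|\le\tau\max_{[0,T]}|u''|$ integrated against the weight $(t_n-s)^{-\gamma}$ only gives $O(\tau)$, whereas the sharper order $O(\tau^{2-\gamma})$ is claimed. I would therefore split the error cell by cell. On each \emph{interior} cell $[t_k,t_{k+1}]$ with $k\le n-2$, writing $R_k:=u-\Pi_{1,k}u$ for the linear-interpolation remainder and using $R_k(t_k)=R_k(t_{k+1})=0$, one integration by parts removes the boundary terms and leaves $-\gamma\int_{t_k}^{t_{k+1}}(t_n-s)^{-\gamma-1}R_k(s)\,ds$, which is dominated by $C\tau^2\max_{[0,T]}|u''|\int_{t_k}^{t_{k+1}}(t_n-s)^{-\gamma-1}\,ds$; summing over $k\le n-2$ and using $\int_0^{t_{n-1}}(t_n-s)^{-\gamma-1}\,ds=\gamma^{-1}(\tau^{-\gamma}-t_n^{-\gamma})\le\gamma^{-1}\tau^{-\gamma}$ gives an $O(\tau^{2-\gamma})$ total. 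The last cell $[t_{n-1},t_n]$ must be handled separately, since $(t_n-s)^{-\gamma-1}$ is no longer integrable there; for it I would use the direct bound $|u'(s)-(u_n-u_{n-1})/\tau|\le\tau\max_{[0,T]}|u''|$ together with $\int_{t_{n-1}}^{t_n}(t_n-s)^{-\gamma}\,ds=\tau^{1-\gamma}/(1-\gamma)$, again $O(\tau^{2-\gamma})$. Adding the two pieces and dividing by $\Gamma(1-\gamma)$ gives the stated remainder in \cref{e4}; the estimate for \cref{e5} follows from the reflected argument verbatim, with the singular cell now being $[t_n,t_{n+1}]$.
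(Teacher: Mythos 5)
Your proposal is correct and it delivers the claimed $O(\tau^{2-\gamma})$ accuracy, but the remainder estimate takes a genuinely different route from the paper. The discretization step is the same in both: replace $u'$ by the difference quotient on each cell (the L1 construction), evaluate the cell weights in terms of the $G_m$, and sum by parts; the paper, however, only carries this out for the right-sided derivative \cref{e5} and simply cites \cite{lin2007finite} for \cref{e4}, whereas you derive both sides explicitly. For the truncation error the paper follows the Lin--Xu computation: it writes $R_\tau^n$ through the integrals $\int_{m\tau}^{(m+1)\tau}\left[(2m+1)\tau-2s\right](s-n\tau)^{-\gamma}ds$, evaluates the resulting sums in closed form, and reduces everything to the boundedness of $\mathcal{S}_n-\frac{2}{2-\gamma}(N-n)^{2-\gamma}$, which is quoted from the reference; your argument instead proceeds cell by cell, exploiting that the linear-interpolation remainder vanishes at the nodes to integrate by parts on the nonsingular cells (giving the weight $(t_n-s)^{-\gamma-1}$, which integrates to $O(\tau^{-\gamma})$ away from $t_n$) and using a direct $O(\tau)\cdot O(\tau^{1-\gamma})$ bound on the cell adjacent to the singularity. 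Your version is more self-contained, requires only $u\in C^2$, and makes transparent where the loss from order $2$ to $2-\gamma$ occurs; the paper's version stays closer to the cited literature. One bookkeeping caution: carried out exactly, your summation by parts yields the boundary terms $-G_{n-1}u_0$ in \cref{e4} and $-G_{N-n-1}u_N$ in \cref{e5}, i.e.\ the standard L1 coefficients, which do not literally match the $+G_nu_0$ and $+G_{N-n-1}u_N$ printed in the lemma (the paper's own final display has the same discrepancy). Since the schemes using the lemma impose $u_0=u_N=0$ this is immaterial downstream, but you should state the coefficient you actually obtain rather than asserting that the reindexing ``assembles'' the displayed right-hand side.
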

\begin{proof}
The discrete formulation \cref{e4} has been proved in many articles such as \cite{lin2007finite}. Next, we will only prove the equation \cref{e5}. Firstly, we give the definition of right-side Caputo derivative \cite{hernandez2016solution}:
\begin{flalign}\label{e6}
&_tD_1^{\gamma}u(t)=-\frac{1}{\Gamma(1-\gamma)}\int_t^1u'(s)(s-t)^{-\gamma}ds,\quad 0<\gamma<1.
\end{flalign}
Then, a discrete approximation of the Caputo derivative \cref{e6} at $t_n$ can be obtained by the following approximation: $\forall$ $1\leq n\leq N-1$,
\begin{equation*}
\aligned
_tD_1^{\gamma}u(t_n)
&=-\frac{1}{\Gamma(1-\gamma)}\int_{t_n}^1u'(s)(s-t_n)^{-\gamma}ds\\
&=-\frac{1}{\Gamma(1-\gamma)}\sum\limits^{N-1}_{m=n}\int_{m\tau}^{(m+1)\tau}\frac{u'(s)}{(s-n\tau)^{\gamma}}ds\\
&=\displaystyle-\frac{1}{\Gamma(1-\gamma)}\sum\limits^{N-1}_{m=n}\int_{m\tau}^{(m+1)\tau}\left[\frac{u_{m+1}-u_{m}}{\tau}\right](s-n\tau)^{-\gamma}ds+R_{\tau}^n\\
&=\displaystyle-\frac{1}{\Gamma(2-\gamma)}\frac{1}{\tau^\gamma}\sum\limits^{N-1}_{m=n}\left(u_{m+1}-u_{m}\right)\left[(m-n+1)^{1-\gamma}-(m-n)^{1-\gamma}\right]+R_{\tau}^n\\
&=\displaystyle\frac{\tau^{-\gamma}}{\Gamma(2-\gamma)}\left[G_0u_n-\sum\limits_{m=n+1}^{N-1}(G_{m-n-1}-G_{m-n})u_m+G_{N-n-1}u_N\right]+R_{\tau}^n,
\endaligned
\end{equation*}
where
\begin{equation*}
\aligned
R_{\tau}^n
&\leq\displaystyle-c_u\left[\sum\limits^{N-1}_{m=n}\int_{m\tau}^{(m+1)\tau}\left[(2m+1)\tau-2s\right](s-n\tau)^{-\gamma}ds+O(\tau^2)\right],
\endaligned
\end{equation*}
where $c_u$ is a constant depending only on $u$. For $R_{\tau}^n$, it is easy to obtain
\begin{equation*}
\aligned
&-\sum\limits_{m=n}^{N-1}\int_{m\tau}^{(m+1)\tau}\left[(2m+1)\tau-2s\right](s-n\tau)^{-\gamma}ds\\
&=\displaystyle-\frac{\tau^{2-\gamma}}{1-\gamma}\sum\limits_{m=n}^{N-1}\left[(2m+1)G_{m-n}-2(m+1)(m-n+1)^{1-\gamma}+2m(m-n)^{1-\gamma}\right]\\
&\quad-\displaystyle\frac{2\tau^{2-\gamma}}{(1-\gamma)(2-\gamma)}\sum\limits_{m=n}^{N-1}\left[(m-n+1)^{2-\gamma}-(m-n)^{2-\gamma}\right]\\
&=\displaystyle\frac{h^{2-\gamma}}{1-\gamma}\left(\mathcal{S}_n-\frac{2}{2-\gamma}(N-n)^{2-\gamma}\right),
\endaligned
\end{equation*}
where
\begin{equation*}
\aligned
\mathcal{S}_n=(N-n)^{1-\gamma}+2[(N-n-1)^{1-\gamma}+(N-n-2)^{1-\gamma}+\cdots+2^{1-\gamma}+1^{1-\gamma}].
\endaligned
\end{equation*}
From \cite{lin2007finite}, it is not difficult to obtain
\begin{equation*}
\aligned
\left|\mathcal{S}_n-\frac{1}{2-\gamma}(N-n)^{2-\gamma}\right|\leq C,
\endaligned
\end{equation*}
where $C$ is a constant independent of $\gamma$ and $n$. It means that $|R_{\tau}^n|\leq C\tau^{2-\gamma}$. The proof is completed.
\end{proof}

Then, we get the following finite difference scheme:

\textbf{Scheme 1:}
\begin{equation}\label{e7}
\aligned
&\mu\left[2G_0u_n-\sum\limits_{k=1,k\neq n}^{N-1}(G_{|n-k-1|}-G_{|n-k|})u_k+G_nu_0+G_{N-n-1}u_N\right]\\
&\quad+u_n=f_n,\quad 1\leq n\leq N-1.
\endaligned
\end{equation}

From lemma \ref{le2} and \cite{liu2017fast}, it is easy to get the compatible condition
\begin{equation*}
\aligned
\lim\limits_{\tau\rightarrow0}\|R_h(u)\|=0.
\endaligned
\end{equation*}
Furthermore, suppose that the right function $f(t)$ is smooth enough, then we know that the finite difference equation is stable and the finite difference solution  $u_h$ is convergent and the convergence rate is $(2-\alpha)$.

\subsection{Solvability of the finite difference scheme}
\begin{theorem}\label{the1}
The finite difference scheme \cref{e7} is uniquely solvable and the stiffness matrix is a symmetric Toeplitz matrix.
\end{theorem}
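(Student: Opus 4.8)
The plan is to recast \cref{e7} as a square linear system $AU=F$ in the vector of interior unknowns $U=(u_1,\dots,u_{N-1})^{T}$, read off the entries of the coefficient (stiffness) matrix $A$, and then argue in two steps: first that $A$ is symmetric Toeplitz, and second that $A$ is nonsingular — in fact symmetric positive definite — which gives unique solvability.

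First I would insert the boundary conditions $u_0=u_N=0$ into \cref{e7}, so that the terms $\mu G_n u_0$ and $\mu G_{N-n-1}u_N$ vanish and the $n$-th equation becomes
\begin{equation*}
\mu\left[2G_0u_n-\sum_{k=1,\,k\neq n}^{N-1}\bigl(G_{|n-k|-1}-G_{|n-k|}\bigr)u_k\right]+u_n=f_n,\qquad 1\le n\le N-1.
\end{equation*}
To identify the coefficients, recall from \cref{le2} that in the left-sided discretization \cref{e4} the coefficient of $u_k$ is indexed by the lag $n-k$ (present for $k<n$), while in the right-sided discretization \cref{e5} it is indexed by the lag $k-n$ (present for $k>n$); in both cases it equals $-\mu(G_{\ell-1}-G_\ell)$ with $\ell=|n-k|\ge1$. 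Hence $A=(a_{nk})_{1\le n,k\le N-1}$ has $a_{nn}=2\mu G_0+1$ and $a_{nk}=-\mu(G_{|n-k|-1}-G_{|n-k|})$ for $n\neq k$. Since $a_{nk}$ depends on $(n,k)$ only through $|n-k|$, $A$ is at once symmetric and Toeplitz; equivalently $A=\mu T+I$ with $I$ the identity and $T$ the symmetric Toeplitz matrix whose first row is $\bigl(2G_0,\,-(G_0-G_1),\,-(G_1-G_2),\,\dots,\,-(G_{N-3}-G_{N-2})\bigr)$. This establishes the structural assertion.

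For invertibility it suffices to verify strict row diagonal dominance and invoke Gershgorin's theorem (strict diagonal dominance implies nonsingularity). Fix a row index $n$. By \cref{le1} the numbers $G_0,G_1,\dots,G_N$ form a monotone positive sequence, so every off-diagonal entry has a fixed sign and the absolute off-diagonal row sum telescopes:
\begin{equation*}
\sum_{k=1,\,k\neq n}^{N-1}|a_{nk}|=\mu\sum_{\ell=1}^{n-1}\bigl(G_{\ell-1}-G_\ell\bigr)+\mu\sum_{\ell=1}^{N-1-n}\bigl(G_{\ell-1}-G_\ell\bigr)=\mu\bigl(G_0-G_{n-1}\bigr)+\mu\bigl(G_0-G_{N-1-n}\bigr).
\end{equation*}
Since $\mu>0$ and $G_{n-1},G_{N-1-n}>0$, the right-hand side is strictly less than $2\mu G_0<2\mu G_0+1=a_{nn}$. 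Thus $A$ is strictly diagonally dominant with positive diagonal; combined with symmetry, $A$ is symmetric positive definite, hence nonsingular, and \cref{e7} has a unique solution.

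The only point that needs a little care is matching the two off-diagonal contributions — the one from \cref{e4} and the one from \cref{e5} — so that they combine into a single function of the lag $|n-k|$ (which is precisely what makes $A$ genuinely symmetric), together with checking that the boundary terms drop. Once this bookkeeping is done, the telescoping identity and the diagonal-dominance argument are routine, so I do not anticipate any serious obstacle.
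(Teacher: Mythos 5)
Your proposal is correct and takes essentially the same route as the paper: insert the homogeneous boundary conditions, observe that the entries depend only on $|n-k|$ (hence symmetric Toeplitz, with off-diagonals $-\mu(G_{|n-k|-1}-G_{|n-k|})$ exactly as in the paper's rewriting with $W_k=G_k-G_{k-1}$), telescope the off-diagonal row sums to $\mu(G_0-G_{n-1})+\mu(G_0-G_{N-1-n})$, and conclude invertibility via diagonal dominance and the Gerschgorin circle theorem. The only (harmless) difference is that you state the dominance strictly and deduce positive definiteness, whereas the paper records the same computation as $1+\mu G_{i-1}+\mu G_{N-i-1}\geq 0$ before invoking Gerschgorin.
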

\begin{proof}
 Noting that $\mu=\frac{\tau^{-\gamma}}{\Gamma(2-\gamma)}$ and $u_0=u_N=0$, then the discretized scheme \cref{e7} can be rewritten as
\begin{equation}\label{e8}
\aligned
&(1+2\mu)u_n-\mu\sum\limits_{k=1}^{n-1}(G_{n-k-1}-G_{n-k})u_k-\mu\sum\limits_{k=n+1}^{N-1}(G_{k-n-1}-G_{k-n})u_k=f_n.
\endaligned
\end{equation}

Furthermore, the discretized system for finite difference method can also be expressed in the following matrix form
\begin{flalign}\label{e9}
\begin{split}
\hspace{10mm}%
&Au=f,
\end{split}&
\end{flalign}
where we denote
\begin{equation*}
\aligned
&u=(u_1,u_2,\ldots,u_{N-2},u_{N-1})^T,\\
&f=(f_1,f_2,\ldots,f_{N-2},f_{N-1})^T.
\endaligned
\end{equation*}

Define $W_k=G_{k}-G_{k-1}$, then, the coefficient matrix $A$ can be expressed as
\begin{flalign*}
A=\left(
\begin{array}{cccccc}
1+2\mu&\mu W_1&\mu W_2&\cdots&\mu W_{N-2}\\
\mu W_1&1+2\mu&\mu W_1&\cdots&\mu W_{N-3}\\
\mu W_2&\mu W_1&1+2\mu&\ddots&\vdots\\
\vdots&\vdots&\ddots&\ddots& \mu  W_1\\
\mu W_{N-2}&\mu W_{N-3}&\cdots&\mu W_1&1+2\mu
\end{array}
\right).
\end{flalign*}
Thus, we obtain $A_{i,j}=A_{i+1,j+1}$ which means $A$ is a Toeplitz matrix.

Noting that $G_{j+1}\leq G_j$ for $1\leq j\leq N-1$, we have
\begin{equation*}
\aligned
A_{i,i}-\sum\limits_{j=1,j\neq i}^{N-1}|A_{i,j}|
&=(1+2\mu)-\mu\sum\limits_{k=1}^{i-1}(G_{n-k-1}-G_{n-k})-\mu\sum\limits_{k=i+1}^{N-1}(G_{k-n-1}-G_{k-n})\\
&=(1+2\mu)-(2-G_{i-1}-G_{N-i-1})\mu\\
&=1+\mu G_{i-1}+\mu G_{N-i-1}\\
&\geq0, \quad 1\leq i\leq N-1.
\endaligned
\end{equation*}
By the Gerschgorin circle theorem, the stiffness matrix $A$ is invertible. This invertibility guarantees the solvability of the discretized scheme. This completes the proof.
\end{proof}
\subsection{A fast procedure for finite difference method}
One of the most popular iterative methods for solving real symmetric positive definite systems such as \cref{e9} is conjugate gradient method. The relating algorithm is as follows. Let $u_{0}$ be an initial guess. Then compute $r_{0}=f-Au_{0}$, $\omega_{1}=r_{0}$ and
\begin{flalign*}
\begin{split}
\hspace{10mm}%
&\kappa_{1}=r^{T}_{0}r_{0}/\omega^{T}_{1}A\omega_{1}\\
\hspace{10mm}%
&u_{1}=u_{0}+\kappa_{k}\omega_{1}\\
\hspace{10mm}%
&r_{1}=r_{0}-\kappa_{1}A\omega_{1}\\
for\ &k=2,3,\ldots\\
\hspace{10mm}%
&\kappa_{k}=r^{T}_{k-1}r_{k-1}/r^{T}_{k-2}r_{k-2}\\
\hspace{10mm}%
&\omega_{k}=r_{k-1}+\kappa_{k}\omega_{k-1}\\
\hspace{10mm}%
&\kappa_{k}=r^{T}_{k-1}r_{k-1}/\omega^{T}_{k}A\omega_{k}\\
\hspace{10mm}%
&u_{k}=u_{k-1}+\kappa_{k}\omega_{k}\\
\hspace{10mm}%
&r_{k}=r_{k-1}-\kappa_{k}A\omega_{k}\\
\hspace{10mm}%
&\text{Check\ for\ convergence,\ continue\ if\ necessary}\\
end&\\
&u=u_{k}.
\end{split}&
\end{flalign*}

To reduce the computational work and memory requirement, we need only to accelerate the matrix-vector multiplication $A\omega$ for any vector $\omega$ and store $A$ efficiently. The stiffness matrix $A$ can be embedded into a $(2N-2)\times(2N-2)$ circulate matrix $C$ as follows
\begin{flalign*}
&C=\left(
\begin{array}{cccccc}
A&B\\
B&A
\end{array}
\right),
\qquad B=\left(
\begin{array}{cccccc}
0&\mu W_{N-2}&\cdots&\mu W_{2}&\mu W_{1}\\
\mu W_{N-2}&0&\mu W_{N-2}&\cdots&\mu W_{2}\\
\vdots&\mu W_{N-2}&0&\ddots&\vdots\\
\mu W_{2}&\vdots&\ddots&\ddots&\mu W_{N-2}\\
\mu W_{1}&\mu W_{2}&\cdots&\mu W_{N-2}&0
\end{array}
\right).
\end{flalign*}
The circulate matrix $C$ has the following decomposition 
\begin{flalign}\label{e10}
&C=F^{-1}diag(Fc)F,
\end{flalign}
where $c$ is the first column vector of $C$ and $F$ is the $2(N-1)\times2(N-1)$ discrete Fourier transform matrix. Denote that $v=(\omega,\omega)^T$, then it is well known that the matrix-vector multiplication $Fv$ for $v\in\mathbb{R}^{2N-2}$ can be carried out in $O(N$log$N)$ operations via the fast Fourier transform. Equation (\ref{e10}) shows that $Cv$ can be evaluated in $O(N$log$N)$ operations. So, we know that $A\omega$ can be evaluated in $O(N$log$N)$ operations for any $\omega\in\mathbb{R}^{N-1}.$ The overall computation cost of the fast conjugate method is $O(N$log$^2N)$. What's more, we only need to store first column vector of $C$ instead of the whole values of matrix $A$ which makes that the memory requirement will reduce from $O(N^{2})$ to $O(N)$.
\section{A fast finite difference scheme for time fractional partial differential equation}
\label{sec:partial}
In this subsection, we present some finite difference schemes and analyse the structures of stiffness matrices for several classical partial fractional equations. For convenience of theoretical analysis, we now denote
\begin{equation*}
\aligned
&G_k=(k+1)^{1-\gamma}-k^{1-\gamma},\quad 0<\gamma<1,\\
&M_k=(k+1)^{2-\gamma}-k^{2-\gamma},\quad 1<\gamma<2,\\
\endaligned
\end{equation*}
and
\begin{equation*}
d_tu^n=\frac{u^n-u^{n-1}}{\tau}.
\end{equation*}

From \cite{lin2007finite} and \cite{zhao2015two}, it is not difficult to verify that, for $\tau\rightarrow0$,
\begin{equation*}
1=G_0>G_1>G_2>\cdots>G_n>\cdots\rightarrow\tau^{\gamma}\rightarrow0,
\end{equation*}
and
\begin{equation*}
1=M_0>M_1>M_2>\cdots>M_n>\cdots\rightarrow\tau^{\gamma-1}\rightarrow0.
\end{equation*}

First, we consider the following fractional partial differential hyperbolic equation
\begin{equation}\label{e11}
_0^CD_t^{\gamma}u(x,t)+a\frac{\partial u(x,t)}{\partial x}=f(x,t),\quad x\in(0,L),\quad t\in(0,T],\quad 0<\gamma<1,
\end{equation}
subject to the initial condition:
\begin{equation*}
u(x,0)=u_0(x),\quad x\in[0,L],
\end{equation*}
with the boundary conditions
\begin{equation*}
\aligned
&u(0,t)=0,\quad t\in[0,T], \quad a>0,\\
&u(L,t)=0,\quad t\in[0,T], \quad a<0.
\endaligned
\end{equation*}

Without loss of generality, we set $a=1$. For above partial differential equation, we have the following discrete formula:

\textbf{Scheme 2:} Suppose $u\in C^{2,2}_{x,t}([0,L]\times[0,T])$, the time fractional Caputo derivative $_0^CD_t^{\gamma}u(x,t)$ for $0<\gamma<1$ at $(x_{i-\frac{1}{2}},t_{n})$ is discretized by \cite{lin2007finite}
\begin{equation}\label{e12}
\aligned
_0^CD_t^{\gamma}u(x_{i-\frac{1}{2}},t^n)=
&\frac{\tau^{-\gamma}}{2\Gamma(2-\gamma)}\left[G_0u_i^n-\sum\limits_{k=1}^{n-1}(G_{n-k-1}-G_{n-k})u_i^k+G_nu_i^0\right]\\
&+\frac{\tau^{-\gamma}}{2\Gamma(2-\gamma)}\left[G_0u_{i-1}^n-\sum\limits_{k=1}^{n-1}(G_{n-k-1}-G_{n-k})u_{i-1}^k+G_nu_{i-1}^0\right]\\
&+O(\tau^{2-\gamma}+h^2).
\endaligned
\end{equation}

For $\displaystyle
\frac{\partial u}{\partial x}$, we have
\begin{align}\label{e13}
\frac{\partial u(x_{i-\frac{1}{2}},t^n)}{\partial x}=\frac{u_i^n-u_{i-1}^n}{h}+O(h^2),
\end{align}
Replacing the function $u_i^n$ with its numerical approximation $U_i^n$, we get the following difference scheme:
\begin{equation*}
\aligned
&\left(\frac{G_0\tau^{-\gamma}}{2\Gamma(2-\gamma)}+\frac{1}{h}\right)U_i^n-\frac{\tau^{-\gamma}}{2\Gamma(2-\gamma)}\sum\limits_{k=1}^{n-1}(G_{n-k-1}-G_{n-k})U_i^k\\
&=\left(\frac{1}{h}-\frac{G_0\tau^{-\gamma}}{2\Gamma(2-\gamma)}\right)U_{i-1}^n+\frac{\tau^{-\gamma}}{2\Gamma(2-\gamma)}\sum\limits_{k=1}^{n-1}(G_{n-k-1}-G_{n-k})U_{i-1}^k\\
&\quad-\frac{\tau^{-\gamma}}{2\Gamma(2-\gamma)}G_nU_{i-1}^0-\frac{\tau^{-\gamma}}{2\Gamma(2-\gamma)}G_nU_i^0+f^n_{i-\frac{1}{2}}.
\endaligned
\end{equation*}

We have the following two procedures to solve the above scheme. For convenience, we denote $c=\frac{\tau^{-\gamma}}{2\Gamma(2-\gamma)}$. First, we introduce the following direct solver:

Let
\begin{equation*}
\aligned
&U^n=(U_1^n,U_2^n,\ldots,U_{M-2}^n,U_{M-1}^{n})^T,
\endaligned
\end{equation*}
\begin{equation*}
\aligned
F^n=
&(f_{\frac{1}{2}}^n-cG_nU_1^0-cG_nU_0^0,f_{\frac{3}{2}}^n-cG_nU_2^0-cG_nU_1^0,\ldots,\\
&f_{M-\frac{3}{2}}^{n}-cG_{n}U_{M-2}^0-cG_{n}U_{M-3}^0,f_{M-\frac{1}{2}}^{n}-cG_nU_{M-1}^0--cG_{n}U_{M-2}^0)^T,
\endaligned
\end{equation*}
then we obtain the matrix form of the finite difference formulation as follows:
\begin{equation}\label{e122}
AU^n=c\sum\limits_{k=1}^{n-1}B(G_{n-k}-G_{n-k-1})U^k+F^n.
\end{equation}

Noting that $U^n=(U_1^n,U^n_2,\ldots,U_{M-1}^n)^T$, then the stiffness matrix $A$ becomes the following formulation:
\begin{flalign*}
A=\left(
\begin{array}{cccccc}
\frac{1}{h}+cG_0&0&\cdots&0&0\\
cG_0-\frac{1}{h}&\frac{1}{h}+cG_0&\cdots&0&0\\
0&cG_0-\frac{1}{h}&\ddots&\vdots&\vdots\\
\vdots&\vdots&\ddots&\ddots&0\\
0&0&\cdots&cG_0-\frac{1}{h}&\frac{1}{h}+cG_0
\end{array}
\right),
\end{flalign*}
and
\begin{flalign*}
B=\left(
\begin{array}{cccccc}
1&0&\cdots&0&0\\
1&1&\cdots&0&0\\
0&1&\ddots&\vdots&\vdots\\
\vdots&\vdots&\ddots&\ddots&0\\
0&0&\cdots&1&1
\end{array}
\right).
\end{flalign*}

The part of RHS $\sum\limits_{k=1}^{n-1}B(G_{n-k}-G_{n-k-1})U^k$ will spend much computational work and it will make the whole computational requirement be $O(N^2M)$. To save the calculational time, we develop a fast procedure by changing solving orders between time and space. The original computing order is a time order:
\begin{equation*}
\aligned
U^1\rightarrow U^2\rightarrow\cdots U^i\rightarrow\cdots U^N.
\endaligned
\end{equation*}
We need to compute all spatial nodal values in a temporal layer, then to compute all spatial nodal values in next temporal layer at next time step (see Figure \ref{fig1}). The new procedure will change to be a space order (see Figure \ref{fig2}):
\begin{equation*}
\aligned
U_1\rightarrow U_2\rightarrow\cdots U_j\rightarrow\cdots U_{M-1},
\endaligned
\end{equation*}
where $U_j=\left(U_j^1,U_j^2,\cdots,U_j^{N}\right)^T$, $\forall$ $(j=1,2,\cdots,M-1)$.
\begin{figure}[htp]
\centering
\includegraphics[width=10cm,height=8cm]{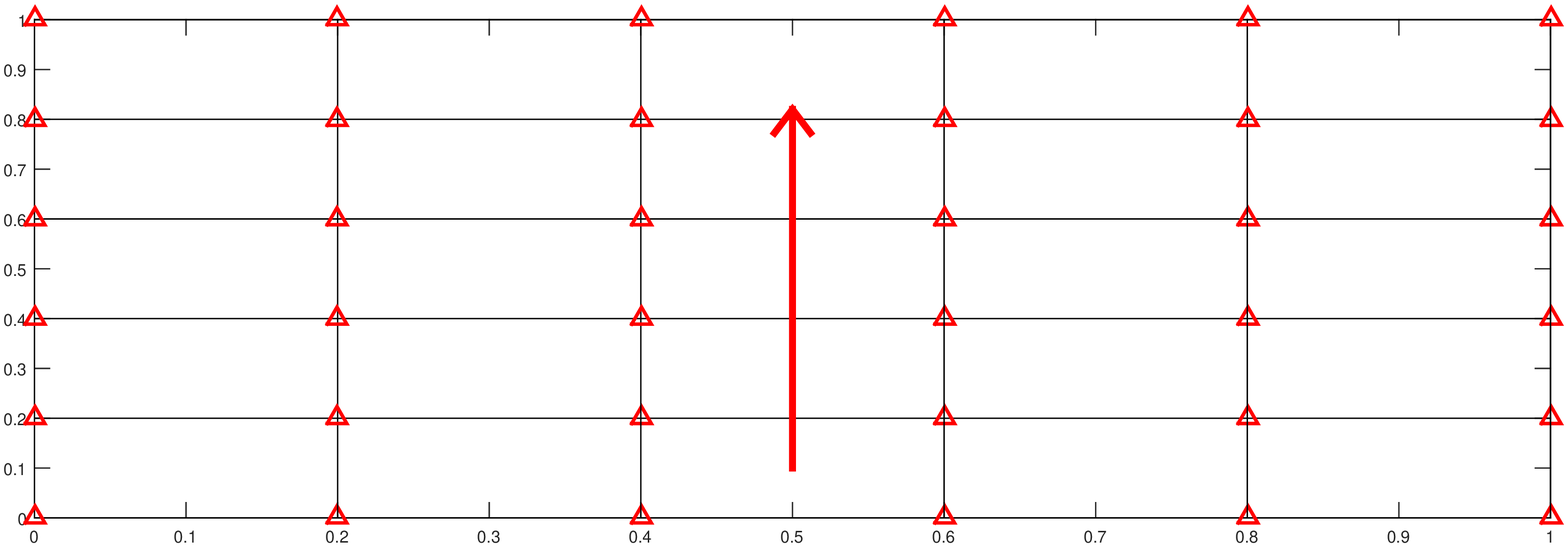}
\caption{The original order to compute nodal values.}\label{fig1}
\end{figure}
\begin{figure}[htp]
\centering
\includegraphics[width=10cm,height=8cm]{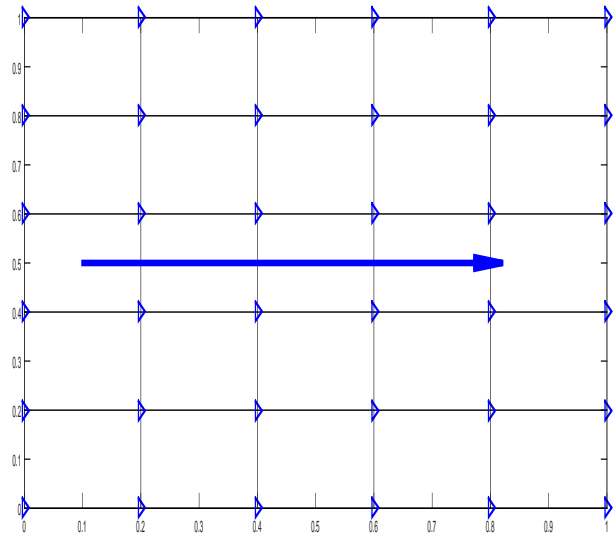}
\caption{The new order to compute nodal values.}\label{fig2}
\end{figure}

By using this new procedure, the matrix scheme (\ref{e122}) can be rewritten as
\begin{equation*}
\widetilde{A}U_i=BU_{i-1}+F_{i-\frac{1}{2}}.
\end{equation*}

By rearranging the order of unknowns, we use  $U_i=(U_i^1,U_i^2,\ldots,U_i^{N-1},U_i^{N})^T$ instead of $U^n=(U_1^n,U^n_2,\ldots,U_{M-1}^n)^T$, then the new stiffness matrix $\widetilde{A}$ can be expressed as
\begin{flalign*}
\widetilde{A}=\left(
\begin{array}{cccccc}
\displaystyle\frac{1}{h}+cG_0&0&\cdots&0&0\\
cG_1-cG_0&\displaystyle\frac{1}{h}+cG_0&0&\cdots&0\\
cG_2-cG_1&cG_1-cG_0&\displaystyle\frac{1}{h}+cG_0&\ddots&\vdots\\
\vdots&\vdots&\ddots&\ddots&0\\
cG_{N-1}-cG_{N-2}&cG_{N-2}-cG_{N-3}&\cdots&cG_1-cG_0&\displaystyle\frac{1}{h}+cG_0
\end{array}
\right),
\end{flalign*}
and
\begin{flalign*}
B=\left(
\begin{array}{cccccc}
\displaystyle\frac{1}{h}-cG_0&0&\cdots&0&0\\
cG_0-cG_1&\displaystyle\frac{1}{h}-cG_0&0&\cdots&0\\
cG_1-cG_2&cG_1-cG_2&\displaystyle\frac{1}{h}-cG_0&\ddots&\vdots\\
\vdots&\vdots&\ddots&\ddots&0\\
cG_{N-2}-cG_{N-1}&cG_{N-3}-cG_{N-2}&\cdots&cG_0-cG_1&\displaystyle\frac{1}{h}-cG_0
\end{array}
\right).
\end{flalign*}

By similar analysis as \cref{the1}, it is easy to obtain that $\widetilde{A}$ and $B$ are both asymmetric Toeplitz matrices. We can solve the scheme (\ref{e12}) by using the fast GMRES method which will be introduced in later section. We know that $\widetilde{A}X$ can be evaluated in $O(N$log$N)$ operations for any $X\in\mathbb{R}^{N}.$ The overall computation cost of the fast GMRES method will be reduced from original $O(N^2M)$ to $O(MN$log$^2N)$. What's more, it is not difficult to see that the memory requirement will reduce from $O(NM)$ to $O(N)$ without using any lossy compression.

Next, we consider the following fractional diffusion-wave equation
\begin{equation}\label{e14}
_0^CD_t^{\gamma}u(x,t)+a\frac{\partial u(x,t)}{\partial x}=f(x,t),\quad x\in(0,L),\quad t\in(0,T],\quad 1<\gamma<2,
\end{equation}
subject to the initial condition:
\begin{equation*}
u(x,0)=u_0(x),\quad \frac{\partial u(x,0)}{\partial t}=\phi(x),\quad x\in[0,L],
\end{equation*}
with the boundary conditions
\begin{equation*}
\aligned
&u(0,t)=0,\quad t\in[0,T], \quad a>0,\\
&u(L,t)=0,\quad t\in[0,T], \quad a<0,
\endaligned
\end{equation*}
without loss of generality, we define $a=1$.

\textbf{Scheme 3:} Suppose $u\in C^{2,3}_{x,t}([0,L]\times[0,T])$, the time fractional Caputo derivative $_0^CD_t^{\gamma}u(x,t)$ for $1<\gamma<2$ at $(x_{i},t_{n-\frac{1}{2}})$ is discretized by \cite{sun2006fully}
\begin{equation}\label{e15}
\aligned
_0^CD_t^{\gamma}u(x_{i},t_{n-\frac{1}{2}})=
&\frac{\tau^{1-\gamma}}{\Gamma(3-\gamma)}\left[M_0d_tu^{n-1/2}_i-\sum\limits_{k=1}^{n-1}\left(M_{n-k-1}-M_{n-k}\right)d_tu^{k-1/2}_i\right]\\
&-\frac{\tau^{1-\gamma}}{2\Gamma(3-\gamma)}M_{n-1}\phi_i+O(\tau^{3-\gamma}).
\endaligned
\end{equation}

Combining equation \cref{e13} with equation \cref{e15}, we obtain the following finite difference discretization formulation
\begin{equation}\label{e16}
\aligned
&\frac{\tau^{1-\gamma}}{\Gamma(3-\gamma)}\left[M_0d_tu^{n-1/2}_i-\sum\limits_{k=1}^{n-1}\left(M_{n-k-1}-M_{n-k}\right)d_tu^{k-1/2}_i\right]\\
&-\frac{\tau^{1-\gamma}}{2\Gamma(3-\gamma)}M_{n-1}\phi_i+\frac{u_i^n-u_{i-1}^n}{h}=f_{i}^{n-1/2}+O(\tau^{3-\alpha}+h).
\endaligned
\end{equation}

Define $c=\frac{\tau^{-\gamma}}{\Gamma(3-\gamma)}$, some simplification leads to
\begin{equation}\label{e17}
\aligned
&cM_0u_i^n+c(M_1-2M_0)u_i^{n-1}+c\sum\limits_{k=1}^{n-2}(M_{n-k-2}-2M_{n-k-1}+M_{n-k})u_i^k\\
&\quad+c(M_{n-2}-M_{n-1})u_i^0-c\tau M_{n-1}\phi_i+\frac{u_i^n-u_{i-1}^n}{h}=f_{i}^{n-1/2}+O(\tau^{3-\alpha}+h).
\endaligned
\end{equation}

\textbf{Direct Procedure}: let
\begin{equation*}
\aligned
&U^n=(U_1^n,U_2^n,\ldots,U_{M-2}^n,U_{M-1}^{n})^T,
\endaligned
\end{equation*}
\begin{flalign*}
F^{n-\frac{1}{2}}=\left(
\begin{array}{c}
f_{1}^{n-\frac{1}{2}}+c\tau M_{n-1}\phi_1-c(M_{n-2}-M_{n-1})u_1^0\\
f_{2}^{n-\frac{1}{2}}+c\tau M_{n-1}\phi_2-c(M_{n-2}-M_{n-1})u_2^0\\
\vdots\\
f_{M-1}^{n-\frac{1}{2}}+c\tau M_{n-1}\phi_{M-1}-c(M_{n-2}-M_{n-1})u_{M-1}^0
\end{array}
\right),
\end{flalign*}
then we obtain the matrix form of the finite difference formulation as follows:
\begin{equation*}
AU^n=c\sum\limits_{k=1}^{n-1}B(M_{n-k-2}-2M_{n-k-1}+M_{n-k})U^k+F^{n-\frac{1}{2}}.
\end{equation*}

Noting that $U^n=(U_1^n,U^n_2,\ldots,U_{M-1}^n)^T$,  the stiffness matrix $A$ can be expressed as:
\begin{flalign*}
A=\left(
\begin{array}{cccccc}
\frac{1}{h}+cM_0&0&\cdots&0&0\\
-\frac{1}{h}&\frac{1}{h}+cM_0&\cdots&0&0\\
0&-\frac{1}{2h}&\ddots&\vdots&\vdots\\
\vdots&\vdots&\ddots&\ddots&0\\
0&0&\cdots&-\frac{1}{h}&\frac{1}{h}+cM_0
\end{array}
\right),
\end{flalign*}
and for $1\leq k\leq n-2$,
\begin{flalign*}
B=\left(
\begin{array}{cccccc}
-1&0&\cdots&0&0\\
-1&-1&\cdots&0&0\\
0&-1&\ddots&\vdots&\vdots\\
\vdots&\vdots&\ddots&\ddots&0\\
0&0&\cdots&-1&-1
\end{array}
\right).
\end{flalign*}

The part of RHS $\sum\limits_{k=1}^{n-1}B(M_{n-k-2}-2M_{n-k-1}+M_{n-k})U^k$ will also spend much computational work and it will make the whole computational requirement be $O(N^2M)$. To save the calculational time, we use fast procedure by changing solving orders between time and space which has been introduced before.

\textbf{New Procedure}: By similar analysis as Scheme 3, we obtain the matrix form of the finite difference formulation as follows:
\begin{equation*}
\widetilde{A}U_i=\frac{1}{h}U_{i-1}+F_{i},
\end{equation*}
where the vector $U_i$ is
\begin{equation*}
\aligned
&U_i=(U_i^1,U_i^2,\ldots,U_i^{N-1},U_i^{N})^T,
\endaligned
\end{equation*}
and
\begin{flalign*}
F_{i}=\left(
\begin{array}{c}
f_{i}^{\frac{1}{2}}+c\tau M_0\phi_i\\
f_{i}^{\frac{3}{2}}+c\tau M_1\phi_i-c(M_{0}-M_{1})u_i^0\\
\vdots\\
f_{i}^{N-\frac{3}{2}}+c\tau M_{N-2}\phi_i-c(M_{N-3}-M_{N-2})u_i^0\\
f_{i}^{N-\frac{1}{2}}+c\tau M_{N-1}\phi_i-c(M_{N-2}-M_{N-1})u_i^0
\end{array}
\right),
\end{flalign*}
Denote $W_j=M_{j-2}-2M_{j-1}+M_{j}$, More importantly, the stiffness matrix $\widetilde{A}$ can be expressed as
\begin{flalign*}
\widetilde{A}=\left(
\begin{array}{cccccc}
\displaystyle\frac{1}{h}+cM_0&0&\cdots&0&0\\
cM_1-2cM_0&\displaystyle\frac{1}{h}+cM_0&0&\cdots&0\\
cW_2&cM_1-2cM_0&\displaystyle\frac{1}{h}+cM_0&\ddots&\vdots\\
\vdots&\vdots&\ddots&\ddots&0\\
cW_{N-1}&cW_{N-2}&\cdots&cM_1-2cM_0&\displaystyle\frac{1}{h}+cM_0
\end{array}
\right).
\end{flalign*}
By simple analysis, we can prove that $\widetilde{A}$ is a Toeplitz matrix.

Next, we consider the following time fractional diffusion equation
\begin{equation}\label{e18}
_0^CD_t^{\gamma}u(x,t)=\frac{\partial^2 u(x,t)}{\partial x^2}+f(x,t),\quad x\in(0,L),\quad t\in(0,T],\quad 0<\gamma<1,
\end{equation}
subject to the initial condition:
\begin{equation*}
u(x,0)=u_0(x),\quad x\in[0,L],
\end{equation*}
and the boundary conditions
\begin{equation*}
\aligned
&u(0,t)=u(L,t)=0,\quad t\in[0,T].
\endaligned
\end{equation*}

Many numerical approaches have been designed to solve above time fractional diffusion equation. Among them, we consider the classical implicit finite difference scheme, which has been proved to have unconditional stability and $L^2$ norm convergence. The classical finite difference scheme given by Lin and Xu \cite{lin2007finite} for the one-dimensional fractional diffusion equation has the following form:

\textbf{Scheme 4:}
\begin{equation}\label{e19}
\aligned
&\frac{\tau^{-\gamma}}{\Gamma(2-\gamma)}\left[G_0u_i^n-\sum\limits_{k=1}^{n-1}(G_{n-k-1}-G_{n-k})u_i^k+G_nu_i^0\right]\\
&=\frac{u_{i+1}^n-2u_i^n+u_{i-1}^n}{h^2}+f_i^n+O(\tau^{2-\gamma}+h^2).
\endaligned
\end{equation}

Then, we can rewrite the finite difference scheme to the following matric form:
\begin{equation}\label{e20}
\aligned
&AU^n=\frac{\tau^{-\gamma}}{\Gamma(2-\gamma)}\left[\sum\limits_{k=1}^{n-1}(G_{n-k-1}-G_{n-k})U^k+G_nU^0\right]+F^n,
\endaligned
\end{equation}
where  $U^n=(U_1^n,U^n_2,\ldots,U_{M-1}^n)^T$, $F^n=(F_1^n,F^n_2,\ldots,F_{M-1}^n)^T$, and the stiffness matrix $A$ becomes the following formulation:
\begin{flalign*}
A=\left(
\begin{array}{ccccccc}
\frac{2}{h^2}+cC_0&-\frac{1}{h^2}&0&\cdots&0&0\\
-\frac{1}{h^2}&\frac{2}{h}+cG_0&-\frac{1}{h^2}&\cdots&0&0\\
0&-\frac{1}{h^2}&\frac{2}{h}+cG_0&-\frac{1}{h^2}&\vdots&\vdots\\
\vdots&\vdots&\ddots&\ddots&\ddots&0\\
0&0&\cdots&-\frac{1}{h^2}&\frac{1}{h^2}+cG_0&-\frac{1}{h^2}\\
0&0&0&\cdots&-\frac{1}{h^2}&\frac{1}{h^2}+cG_0
\end{array}
\right).
\end{flalign*}

The linear system (\ref{e20}) can be solved by the time-marching method. However, it needs an average of $O(MN)$ operations to obtain a RHS vector and the time-marching method for system (\ref{e20}) requires an overall computational complexity of $O(MN^2)$. Next, we will consider a fast method to solve the linear system (\ref{e20}). First, we define
\begin{equation*}
\aligned
&\widetilde{U}=(U_1,U_2,\ldots,U_{M-2},U_{M-1})^T,
\endaligned
\end{equation*}
where
\begin{equation*}
\aligned
&U_i=(U_i^1,U_i^2,\ldots,U_i^{N-1},U_i^{N}).
\endaligned
\end{equation*}

Then, the linear system (\ref{e20}) can be rewritten as
\begin{equation}\label{e21}
\aligned
&\widetilde{A}\widetilde{U}=\widetilde{F},
\endaligned
\end{equation}
where the stiffness matrix $\widetilde{A}$ can be written as the following block matrix
\begin{flalign*}
\widetilde{A}=\left(
\begin{array}{ccccccc}
\widehat{A}&\widehat{B}&0&\cdots&0&0\\
\widehat{B}&\widehat{A}&\widehat{B}&\cdots&0&0\\
0&\widehat{B}&\widehat{A}&\widehat{B}&\vdots&\vdots\\
\vdots&\vdots&\ddots&\ddots&\ddots&0\\
0&0&\cdots&\widehat{B}&\widehat{A}&\widehat{B}\\
0&0&0&\cdots&\widehat{B}&\widehat{A}
\end{array}
\right).
\end{flalign*}
By simple calculation, it is easy to obtain
\begin{flalign*}
\widehat{A}=\left(
\begin{array}{cccccc}
\displaystyle\frac{2}{h^2}+cG_0&0&\cdots&0&0\\
cG_1-cG_0&\displaystyle\frac{1}{h}+cG_0&0&\cdots&0\\
cG_2-cG_1&cG_1-cG_0&\displaystyle\frac{1}{h}+cG_0&\ddots&\vdots\\
\vdots&\vdots&\ddots&\ddots&0\\
cG_{N-1}-cG_{N-2}&cG_{N-2}-cG_{N-3}&\cdots&cG_1-cG_0&\displaystyle\frac{1}{h}+cG_0
\end{array}
\right),
\end{flalign*}
where $\widehat{A}$ is the block lower Toeplitz matrix. Here the block matrix $\widehat{B}$ is
\begin{flalign*}
\widehat{B}=\left(
\begin{array}{cccccc}
\displaystyle-\frac{1}{h^2}&0&0&\cdots&0\\
0&\displaystyle-\frac{1}{h^2}&0&\cdots&0\\
\vdots&\vdots&\ddots&\cdots&0\\
0&\cdots&\cdots&\displaystyle-\frac{1}{h^2}&0\\
0&\cdots&\cdots&0&\displaystyle-\frac{1}{h^2}
\end{array}
\right).
\end{flalign*}

By making use of block Toeplitz structure in (\ref{e20}), the proposed procedure can be employed to solve the equations with a lower cost at $O(MN$log$^2N)$ by using fast Fourier transforms.
\subsection{The fast finite difference method}
\label{sec:fast method}
For general real nonsymmetric linear system $Au=b$, we introduce the generalized minimum residual (GMRES) method \cite{jin2010preconditioning}. The GMRES method was proposed by Saad and Schultz in 1986, which is one of the most important Krylov subspace methods for real nonsymmetric linear system \cite{saad1986gmres}. Let $u_{0}$ be an initial guess. Then the following GMRES algorithm can be used to solve  $Au=b$.
\begin{flalign*}
\begin{split}
\hspace{10mm}%
&r_0=b-Ax_0,\quad \beta=\|r_0\|_2,\quad v_1=r_0/\beta\\
\hspace{10mm}%
&\text{for}~j=1:k\\
\hspace{10mm}%
&\qquad w_j=Av_j\\
\hspace{10mm}%
&\qquad \text{for}~ i=1:j\\
\hspace{10mm}%
&\qquad\qquad h_{ij}=w_j^Tv_i\\
\hspace{10mm}%
&\qquad\qquad w_j=w_j-h_{ij}v_i\\
\hspace{10mm}%
&\qquad \text{end}\\
\hspace{10mm}%
&\qquad h_{j+1,j}=\|w_j\|_2\quad \text{if}~h_{j+1,j}=0,\ \text{set}\ k=j\ \text{and\ go\ to\ (*)}\\
\hspace{10mm}%
&\qquad v_{j+1}=w_j/h_{j+1,j}\\
\hspace{10mm}%
&\text{end}\\
\hspace{10mm}%
&\omega_{k}=r^{T}_{k-1}r_{k-1}/d^{T}_{k}Ad_{k}\\
\hspace{10mm}%
&\text{(*)\ compute}\ y_k\ \text{the\ minimizer\ of} \|\beta e_1-\widetilde{H}_ky\|_2\\
\hspace{10mm}%
&u_k=u_0+Vy_k\\
\hspace{10mm}%
&u=u_k\\
\end{split}&
\end{flalign*}
where $\widetilde{H}_k=(h_{ij})_{1\leq i\leq k+1,1\leq j\leq k}$. The matrix $V=(v_1,v_2,\ldots,v_k)\in \mathbb{R}^{n\times k}$ with $k\leq n$ is a matrix with orthonormal columns. In order to avoid a large storage and computational cost for the orthogonalization, the GMRES method is usually restarted after each $m$ $(m\ll n)$ iteration steps (refer to the GMRES(m) method).

To reduce the computational work and memory requirement, we need only to accelerate the matrix-vector multiplication $Ad$ for any vector $d$ and store $A$ efficiently. Let $a_{j-i}$ denote the common entry in the $(j-i)$-th descending diagonal of $A$ from left to right. Namely, $A_{i,j}=a_{j-i}$, $\forall j\geq i.$

The stiffness matrix $A$ can be embedded into a $2N\times2N$ circulate matrix $C$ as follows \cite{a10,a9,a2}
\begin{flalign*}
\begin{split}
\hspace{10mm}%
&C=\left(
\begin{array}{cccccc}
A&B\\
B&A
\end{array}
\right),
\qquad B=\left(
\begin{array}{cccccc}
0&a_{N-1}&\cdots&a_{2}&a_{1}\\
a_{N-1}&0&a_{N-2}&\cdots&a_{2}\\
\vdots&a_{N-2}&0&\ddots&\vdots\\
a_{2}&\vdots&\ddots&\ddots&a_{N-1}\\
a_{1}&a_{2}&\cdots&a_{N-1}&0
\end{array}
\right).
\end{split}&
\end{flalign*}
The circulate matrix $C$ has the following decomposition \cite{a10,a2,aa2}
\begin{flalign}\label{be1}
\begin{split}
\hspace{10mm}%
&C=F^{-1}diag(Fc)F,
\end{split}&
\end{flalign}
where $c$ is the first column vector of $C$ and $F$ is the $2N\times2N$ discrete Fourier transform matrix. Denote that $w=(d,d)^T$, then it is well known that the matrix-vector multiplication $Fw$ for $w\in\mathbb{R}^{2N}$ can be carried out in $O(N$log$N)$ operations via the fast Fourier transform. Equation (\ref{be1}) shows that $Cw$ can be evaluated in $O(N$log$N)$ operations. So, we know that $Ad$ can be evaluated in $O(N$log$N)$ operations for any $d\in\mathbb{R}^{N}.$ The overall computation cost of the fast GMRES method is $O(MN$log$^2N)$.
\section{Numerical results}
\label{sec:numerical}
In this section, some computational experiments have been carried out. Here we use Gaussian elimination (Gauss), the conjugate gradient (CG) method, and the fast conjugate gradient (FCG) method to solve two-sided fractional ordinary differential equation and direct method and fast
method including fast generalized minimum residual method to solve time fractional partial differential equations for comparison. These methods were implemented in Matlab, and the numerical experiments were run on a 8-GB memory computer.

\textbf{Example 1}: we consider the following two-sided ordinary fractional differential equation with the real solution $u(t)=t(1-t)$:
\begin{flalign*}
\renewcommand{\arraystretch}{1.5}
  \left\{
   \begin{array}{l}
_0D_t^{\gamma}u(t)+_tD_1^{\gamma}u(t)+u(t)=f(t),\quad t\in(0,1),\\
u(0)=u(1)=0,
\end{array}\right.
\end{flalign*}
with forcing function
\begin{equation*}
\aligned
\displaystyle f(t)=\frac{t^{1-\gamma}}{\Gamma(2-\gamma)}-\frac{2t^{2-\gamma}}{\Gamma(3-\gamma)}+\frac{2t^{2-\gamma}1}{\Gamma(3-\gamma)}(\gamma t-2t^2-\gamma+2t)(1-t)^{-\gamma}+t(1-t).
\endaligned
\end{equation*}

In the tables and figures, we provide the $L^2(\Omega)$ norm of the error and the corresponding rates of convergence for a sequence of grid sizes. From \cref{tab:tab1}, one can see that Scheme 1 for the ordinary differential model \cref{e2} converge at the optimal rates  of $O(h^{2-\gamma})$. In \cref{tab:tab2}, we present the CPU time consumed by the Gauss, CG and FCG methods. We observe that the FCG solver results in a significantly less computational time, compared to the Gauss and CG solvers. Moreover, as the mesh size $h$ is reduced to $h=2^{-15}$, both Gauss and CG methods run out of memory, whereas the FCG method solves the problem still consuming relatively little computational time.

\begin{table}[h!b!p!]
\small
\renewcommand{\arraystretch}{1.1}
\centering
\caption{\small The $L_2$ errors and convergence rates for Scheme 1 for different values of $\gamma$.}\label{tab:tab1}
\begin{tabular}{cccccccccc}
\hline
\multicolumn{1}{l}{$\tau$}& \multicolumn{2}{l}{$\gamma=0.1$}&&\multicolumn{2}{l}{$\gamma=0.5$}&&\multicolumn{2}{l}{$\gamma=0.9$}\\
\cline{2-3}\cline{5-6} \cline{8-9}
&\multicolumn{1}{l}{$L_2$ error} &\multicolumn{1}{r}{Rate} && \multicolumn{1}{l}{$L_2$ error} &\multicolumn{1}{r}{Rate} && \multicolumn{1}{l}{$L_2$ error} &\multicolumn{1}{r}{Rate} \\
\hline
\multicolumn{1}{l}{$1/2^5$}&7.8554e-5&-&&1.5571e-3&-&&1.9214e-2&-\\
\multicolumn{1}{l}{$1/2^6$}&2.3116e-5&\multicolumn{1}{r}{1.7648}&&5.7041e-4&\multicolumn{1}{r}{1.4488}&&9.6294e-3&\multicolumn{1}{r}{0.9966}\\
\multicolumn{1}{l}{$1/2^7$}&6.7000e-6&\multicolumn{1}{r}{1.7867}&&2.0606e-4&\multicolumn{1}{r}{1.4689}&&4.6751e-3&\multicolumn{1}{r}{1.0424}\\
\multicolumn{1}{l}{$1/2^8$}&1.9201e-6&\multicolumn{1}{r}{1.8030}&&7.3842e-5&\multicolumn{1}{r}{1.4806}&&2.2298e-3&\multicolumn{1}{r}{1.0681}\\
\multicolumn{1}{l}{$1/2^{9}$}&5.4550e-7&\multicolumn{1}{r}{1.8155}&&2.6334e-5&\multicolumn{1}{r}{1.4875}&&1.0530e-3&\multicolumn{1}{r}{1.0824}\\
\multicolumn{1}{l}{$1/2^{10}$}&1.5388e-7&\multicolumn{1}{r}{1.8258}&&9.3635e-6&\multicolumn{1}{r}{1.4918}&&4.9455e-4&\multicolumn{1}{r}{1.0903}\\
\multicolumn{1}{l}{$1/2^{11}$}&4.3158e-8&\multicolumn{1}{r}{1.8341}&&3.3230e-6&\multicolumn{1}{r}{1.4946}&&2.3155e-4&\multicolumn{1}{r}{1.0948}\\
\multicolumn{1}{l}{$1/2^{12}$}&1.2045e-8&\multicolumn{1}{r}{1.8412}&&1.1779e-6&\multicolumn{1}{r}{1.4963}&&1.0823e-4&\multicolumn{1}{r}{1.0972}\\
\hline
\end{tabular}
\end{table}

\begin{table}[htbp]
\small
\renewcommand{\arraystretch}{1.0}
\centering
   \begin{center}
\caption{The CPU time consumed by the Gauss, CG and FCG methods for different mesh sizes for the Scheme 1.}\label{tab:tab2}
\begin{tabular}{cccccccccc}\hline
\multicolumn{1}{l}{$\tau$}& \multicolumn{2}{c}{Gauss method}&\multicolumn{2}{c}{CG method}&\multicolumn{2}{c}{FCG method}\\
\cline{2-3}\cline{4-5} \cline{6-7}
&\multicolumn{1}{c}{CPU} &\multicolumn{1}{c}{$\#$ of iter.} & \multicolumn{1}{c}{CPU} &\multicolumn{1}{c}{$\#$ of iter.} & \multicolumn{1}{c}{CPU} &\multicolumn{1}{c}{$\#$ of iter.} \\
\hline
$2^{-9}$     &1.20 s             &-&0.21 s                &62  & 0.01 s                &62   \\
$2^{-10}$    &9.90 s             &-&1.27 s                &75  & 0.06 s                &75   \\
$2^{-11}$    &103  s             &-&5.58 s                &91  & 0.09 s                &91   \\
$2^{-12}$    &931  s             &-&23.8 s                &110 & 0.57 s                &110  \\
$2^{-13}$    &7328 s             &-&105  s                &133 & 1.29 s                &133  \\
$2^{-14}$    &$>$10 h            &-&515  s                &159 & 2.24 s                &159  \\
$2^{-15}$    &Out of memory      &-&Out of memory         &-   & 4.58 s                &192  \\
$2^{-16}$    &N/A                &-&N/A                   &-   & 9.81 s                &230  \\
$2^{-17}$    &N/A                &-&N/A                   &-   & 27.2 s                &276  \\
$2^{-18}$    &N/A                &-&N/A                   &-   & 80.9 s                &331  \\
$2^{-19}$    &N/A                &-&N/A                   &-   & 281  s                &415  \\
$2^{-20}$    &N/A                &-&N/A                   &-   & 451  s                &498  \\
$2^{-21}$    &N/A                &-&N/A                   &-   & 977  s                &599  \\
$2^{-22}$    &N/A                &-&N/A                   &-   & 2548 s                &741  \\\hline
\end{tabular}
\end{center}
\end{table}

\textbf{Example 2}: we consider the following partial fractional differential equation with the real solution $u(x,t)=t\sin(\pi x)$:
\begin{flalign*}
\renewcommand{\arraystretch}{1.5}
  \left\{
   \begin{array}{l}
\displaystyle_0^CD_t^{\gamma}u(x,t)+\frac{\partial u(x,t)}{\partial x}=f(x,t),\quad x\in(0,1),\ t\in(0,1],\quad 0<\gamma<1,\\
u(x,0)=0,\quad x\in[0,1],\\
u(0,t)=u(1,t)=0,\quad t\in[0,1],
\end{array}\right.
\end{flalign*}
with forcing function
\begin{equation*}
\aligned
\displaystyle f(x,t)=\frac{t^{1-\gamma}}{\Gamma(2-\gamma)}\sin(\pi x)+\pi t\cos(\pi x).
\endaligned
\end{equation*}

We check the errors and convergence rates for Scheme 2 by using Example 2. We fixed the temporal mesh size $\tau=1/2^{10}$ and refine the spatial mesh size $h$ from $\frac{1}{8}$ to $\frac{1}{256}$. \cref{tab:tab5} shows the numerical results for two different fractional values: $\gamma=0.1$ and $\gamma=0.9$. While \cref{tab:tab5} shows that the convergence order in space is $O(h^2)$ for both the direct method and our fast procedure. From \cref{tab:tab6}, we observe that the fast proedure is much faster than the direct method.
\begin{table}[h!b!p!]
\small
\renewcommand{\arraystretch}{1.1}
\centering
\caption{\small The $L_2$ errors and convergence rates for direct method and fast method with different values of $\gamma$.}\label{tab:tab5}
\begin{tabular}{cccccccccccc}
\hline
\multicolumn{1}{l}{$h$}&\multicolumn{2}{c}{$\gamma=0.1$}&&&&&\multicolumn{2}{c}{$\gamma=0.9$}\\
\cline{2-5}\cline{7-10}
&\multicolumn{2}{c}{Direct method}&\multicolumn{2}{c}{Fast method}&&\multicolumn{2}{c}{Direct method}&\multicolumn{2}{c}{Fast method}\\
\cline{2-5}\cline{7-10}
&\multicolumn{1}{l}{$L_2$ error}&\multicolumn{1}{r}{Rate} &\multicolumn{1}{l}{$L_2$ error}&\multicolumn{1}{r}{Rate}&& \multicolumn{1}{l}{$L_2$ error}&\multicolumn{1}{r}{Rate}  & \multicolumn{1}{l}{$L_2$ error} &\multicolumn{1}{r}{Rate} \\
\hline
\multicolumn{1}{l}{$1/2^3$}&7.8115e-2&-   &7.8113e-2&-   &&8.5609e-2&-   &8.5608e-2&-   \\
\multicolumn{1}{l}{$1/2^4$}&1.9620e-2&1.99&1.9619e-2&1.99&&2.1600e-2&1.98&2.1598e-2&1.98\\
\multicolumn{1}{l}{$1/2^5$}&4.9189e-3&1.99&4.8637e-3&2.01&&5.4270e-3&1.99&5.4222e-3&1.99\\
\multicolumn{1}{l}{$1/2^6$}&1.2316e-3&2.00&1.2039e-3&2.01&&1.3602e-3&2.00&1.3536e-3&2.00\\
\multicolumn{1}{l}{$1/2^7$}&3.0816e-4&2.00&2.9434e-4&2.03&&3.4051e-4&2.00&3.2967e-4&2.03\\
\multicolumn{1}{l}{$1/2^8$}&7.7072e-5&2.00&7.0183e-5&2.06&&8.5185e-5&2.00&7.3666e-5&2.16\\
\hline
\end{tabular}
\end{table}

\begin{table}[htbp]
\small
\renewcommand{\arraystretch}{1.1}
\centering
   \begin{center}
\caption{The CPU time consumed by direct and fast methods with different mesh sizes for Scheme 2 with $\gamma=0.9$.}\label{tab:tab6}
\begin{tabular}{|c|c|c|c|c|c|c|c|c|}
\hline
$h=\tau$ &$1/2^7$ &$1/2^8$ &$1/2^9$ &$1/2^{10}$ &$1/2^{11}$ &$1/2^{12}$&$1/2^{13}$\\
\hline
Direct method (CPU)&7.14s&57.7s&471s&3826s&29333s&$>$10h&$>$2d\\
\hline
Fast method (CPU)  &0.17s&0.43s&1.49s&5.56s&21.4s&399s&1191s\\
\hline
\end{tabular}
\end{center}
\end{table}

\textbf{Example 3}: we consider the following partial fractional differential equation with the real solution $u(x,t)=t^3x(1-x)$:
\begin{flalign*}
\renewcommand{\arraystretch}{1.5}
  \left\{
   \begin{array}{l}
\displaystyle_0^CD_t^{\gamma}u(x,t)+\frac{\partial u(x,t)}{\partial x}=f(x,t),\quad x\in(0,1),\ t\in(0,1],,\quad 1<\gamma<2,\\
u(x,0)=0,\frac{\partial u(x,0)}{\partial t}=0,\quad x\in[0,1],\\
u(0,t)=0,\quad t\in[0,1],
\end{array}\right.
\end{flalign*}
with forcing function
\begin{equation*}
\aligned
\displaystyle f(x,t)=\frac{6t^{3-\gamma}}{\Gamma(4-\gamma)}(x-x^2)+t^3(1-2x).
\endaligned
\end{equation*}
Then, we check the errors and convergence rates for Scheme 3 by using Example 3. We take $h=\tau$ from $\frac{1}{8}$ to $\frac{1}{256}$ to check the convergence rates. \cref{tab:tab7} shows the numerical results for two different fractional values: $\gamma=1.1$ and $\gamma=1.9$. While \cref{tab:tab7} shows that the convergence order in space is $O(h)$ for both the direct method and our fast method. From \cref{tab:tab8}, we observe that our fast procedure is much faster than the direct procedure.
\begin{table}[h!b!p!]
\small
\renewcommand{\arraystretch}{1.1}
\centering
\caption{\small The $L_2$ errors and convergence rates for direct method and fast method with different values of $\gamma$.}\label{tab:tab7}
\begin{tabular}{cccccccccccc}
\hline
\multicolumn{1}{l}{$h$}&\multicolumn{2}{c}{$\gamma=1.1$}&&&&&\multicolumn{2}{c}{$\gamma=1.9$}\\
\cline{2-5}\cline{7-10}
&\multicolumn{2}{c}{Direct method}&\multicolumn{2}{c}{Fast method}&&\multicolumn{2}{c}{Direct method}&\multicolumn{2}{c}{Fast method}\\
\cline{2-5}\cline{7-10}
&\multicolumn{1}{l}{$L_2$ error}&\multicolumn{1}{r}{Rate} &\multicolumn{1}{l}{$L_2$ error}&\multicolumn{1}{r}{Rate}&& \multicolumn{1}{l}{$L_2$ error}&\multicolumn{1}{r}{Rate}  & \multicolumn{1}{l}{$L_2$ error} &\multicolumn{1}{r}{Rate} \\
\hline
\multicolumn{1}{l}{$1/2^3$}&3.0960e-2&-   &3.0960e-2&-   &&1.7284e-2&-   &1.7484e-2&-   \\
\multicolumn{1}{l}{$1/2^4$}&1.6276e-2&0.93&1.6275e-2&0.93&&9.8789e-3&0.81&9.8789e-3&0.81\\
\multicolumn{1}{l}{$1/2^5$}&8.3729e-3&0.96&8.3729e-3&0.96&&5.0070e-3&0.98&5.0070e-3&0.98\\
\multicolumn{1}{l}{$1/2^6$}&4.2530e-3&0.98&4.2529e-3&0.98&&2.4417e-3&1.03&2.4418e-3&1.03\\
\multicolumn{1}{l}{$1/2^7$}&2.1445e-3&0.99&2.1444e-3&0.99&&1.1728e-3&1.05&1.1729e-3&1.05\\
\multicolumn{1}{l}{$1/2^8$}&1.0769e-3&0.99&1.0769e-3&0.99&&5.6093e-4&1.06&5.6694e-4&1.04\\
\hline
\end{tabular}
\end{table}
\begin{table}[htbp]
\small
\renewcommand{\arraystretch}{1.1}
\centering
   \begin{center}
\caption{The CPU time consumed by direct and fast methods with different mesh sizes for Scheme 3 with $\gamma=1.5$.}\label{tab:tab8}
\begin{tabular}{|c|c|c|c|c|c|c|c|c|}
\hline
$h=\tau$ &$1/2^6$ &$1/2^7$ &$1/2^8$ &$1/2^{9}$ &$1/2^{10}$ &$1/2^{11}$&$1/2^{12}$\\
\hline
Direct method (CPU)&0.62s&4.77s&38.0s&302s&2419s&$>$10h&$>$2d\\
\hline
Fast method (CPU)  &0.14s&0.23s&0.57s&2.01s&6.67s&27.4s&573s\\
\hline
\end{tabular}
\end{center}
\end{table}

\textbf{Example 4}: we consider the following time fractional diffusion equation with the real solution $u(x,t)=t^3\sin(\pi x)$:
\begin{flalign*}
\renewcommand{\arraystretch}{1.5}
  \left\{
   \begin{array}{l}
\displaystyle_0^CD_t^{\gamma}u(x,t)=\frac{\partial^2 u(x,t)}{\partial x^2}+f(x,t),\quad x\in(0,1),\ t\in(0,1],,\quad 0<\gamma<1,\\
u(x,0)=0,\quad x\in[0,1],\\
u(0,t)=0,\quad t\in[0,1],
\end{array}\right.
\end{flalign*}
with forcing function
\begin{equation*}
\aligned
\displaystyle f(x,t)=\frac{6t^{3-\gamma}}{\Gamma(4-\gamma)}\sin(\pi x)+\pi^2t^3\sin(\pi x).
\endaligned
\end{equation*}
Errors and convergence rates are considered for Scheme 4 by using Example 4. We take $h=\tau$ from $\frac{1}{8}$ to $\frac{1}{256}$ to check the convergence rates. \cref{tab:tab9} shows the numerical results for two different fractional values: $\gamma=0.1$ and $\gamma=0.9$. While \cref{tab:tab9} shows that the direct method and our fast procedure have the same convergence order. From \cref{tab:tab10}, we can also observe that our fast method is much faster than the direct method.
\begin{table}[h!b!p!]
\small
\renewcommand{\arraystretch}{1.1}
\centering
\caption{\small The $L_2$ errors and convergence rates for direct method and fast method with different values of $\gamma$ for Scheme 4.}\label{tab:tab9}
\begin{tabular}{cccccccccccc}
\hline
\multicolumn{1}{l}{$h=\tau$}&\multicolumn{2}{c}{$\gamma=0.1$}&&&&&\multicolumn{2}{c}{$\gamma=0.9$}\\
\cline{2-5}\cline{7-10}
&\multicolumn{2}{c}{Direct method}&\multicolumn{2}{c}{Fast method}&&\multicolumn{2}{c}{Direct method}&\multicolumn{2}{c}{Fast method}\\
\cline{2-5}\cline{7-10}
&\multicolumn{1}{l}{$L_2$ error}&\multicolumn{1}{r}{Rate} &\multicolumn{1}{l}{$L_2$ error}&\multicolumn{1}{r}{Rate}&& \multicolumn{1}{l}{$L_2$ error}&\multicolumn{1}{r}{Rate}  & \multicolumn{1}{l}{$L_2$ error} &\multicolumn{1}{r}{Rate} \\
\hline
\multicolumn{1}{l}{$1/2^3$}&8.4669e-3&-   &8.4669e-3&-   &&2.3623e-2&-   &2.3623e-2&-   \\
\multicolumn{1}{l}{$1/2^4$}&2.1203e-3&1.99&2.1203e-3&1.99&&9.5705e-3&1.30&9.5705e-3&1.30\\
\multicolumn{1}{l}{$1/2^5$}&5.3323e-4&1.99&5.3323e-4&1.99&&4.1202e-3&1.21&4.1202e-3&1.21\\
\multicolumn{1}{l}{$1/2^6$}&1.3428e-4&1.98&1.3425e-4&1.98&&1.8379e-3&1.16&1.8378e-3&1.16\\
\multicolumn{1}{l}{$1/2^7$}&3.3841e-5&1.98&3.3796e-5&1.99&&8.3651e-4&1.13&8.3140e-4&1.14\\
\multicolumn{1}{l}{$1/2^8$}&8.5330e-6&1.99&8.4517e-6&2.00&&3.8563e-4&1.12&3.5459e-4&1.22\\
\hline
\end{tabular}
\end{table}
\begin{table}[htbp]
\small
\renewcommand{\arraystretch}{1.1}
\centering
   \begin{center}
\caption{The CPU time consumed by direct and fast methods with different mesh sizes for Scheme 4 with $\gamma=0.1$.}\label{tab:tab10}
\begin{tabular}{|c|c|c|c|c|c|c|c|}
\hline
$h=\tau$ &$1/2^5$ &$1/2^6$ &$1/2^7$ &$1/2^{8}$ &$1/2^{9}$ &$1/2^{10}$\\
\hline
Direct method (CPU)&0.09s&0.73s&6.09s&57.0s&641s&9863s\\
\hline
Fast method (CPU)  &0.00s&0.07s&0.26s&2.27s&15.6s&263s\\
\hline
\end{tabular}
\end{center}
\end{table}

\section{Conclusions}
\label{sec:conclusions}
 For  time fractional equations,  unless we extract the Toeplitz structure for stiffness matrix, we  can not use  fast Fourier transform to speed up the evaluation. In this article, we study a fast procedure to solve both ordinary and partial fractional differential equations based on Caputo fractional derivative. For differential equations with Caputo fractional time derivative, we give several different finite difference schemes and analyse the stiffness matrices. We present a fast solution technique depending on the special structure of coefficient matrices by rearranging the order of unknowns in time and space directions. We observe that our fast procedure is much faster than the direct procedure.

\section*{Acknowledgments}
We would like to acknowledge the assistance of volunteers in putting together this example manuscript and supplement. This work was supported in part by the National Natural Science Foundation of China under Grants 91630207, 11471194 and 11571115, by the National Science Foundation under Grant DMS-1216923,by the OSD/ARO MURI Grant W911NF-15-1-0562, by the National Science and Technology Major Project of China under Grants 2011ZX05052 and 2011ZX05011-004, and by Shandong Provincial Natural Science Foundation, China under Grant ZR2011AM015, and by Taishan Scholars Program of Shandong Province of China.

\bibliographystyle{siamplain}
\bibliography{references}
\end{document}